\newtheorem*{conj*}{Conjecture}
\newtheorem*{thm*}{Theorem}
\newtheorem{prop}{Proposition}[section]
\newtheorem{LM}{Lemma}[section]
\newtheorem{thm}{Theorem}[section]
\newtheoremstyle{pourlesremarques}{\topsep}{\topsep}{\normalfont}{}{\bfseries}{.}{ }{}
\theoremstyle{pourlesremarques}
\newtheorem*{rem*}{Remark}
\newtheoremstyle{pourlesexemples}{\topsep}{\topsep}{\normalfont}{}{\bfseries}{.}{ }{}
\theoremstyle{pourlesexemples}
\renewcommand{\l}{\lambda}
\newcommand{\sm}{\mathcal{C}^\infty}
\title {\textbf{Cuspidal representations of $GL(n,F)$ distinguished by a maximal Levi subgroup,
 with $F$ a non-archimedean local field}}
\author{Nadir MATRINGE\footnote{Nadir Matringe, Universit\'e de Poitiers, Laboratoire de Math\'ematiques et Applications,
T\'el\'eport 2 - BP 30179, Boulevard Marie et Pierre Curie, 86962, Futuroscope Chasseneuil Cedex. Email: Nadir.Matringe@math.univ-poitiers.fr}}
\begin{document}
\maketitle

\begin{abstract}
Let $\rho$ is a cuspidal representation of $GL(n,F)$, with $F$ 
a non archimedean local field, and $H$ a maximal Levi subgroup of $GL(n,F)$. 
We show that if $\rho$ is $H$-distinguished, then $n$ is even, and $H\simeq GL(n/2,F)\times GL(n/2,F)$.
\end{abstract}

\section{Preliminaries}

Let $F$ be nonarchimedean local field. We denote $GL(n,F)$ by $G_n$ for $n\geq 1$, and by $N_n$ the unipotent radical of the Borel subgroup of $G_n$ given by upper triangular matrices. 
For $n\geq2$ we denote by $U_n$ the group of matrices $u(x)=\begin{pmatrix} 
                                                                                         I_{n-1}    & x\\
                                                                                                    & 1 \end{pmatrix}$ 
                                                                                                    for $x$ in $F^{n-1}$.\\

For $n> 1$, the map $g\mapsto \begin{pmatrix} g & \\ & 1 \end{pmatrix}$ is an embedding of the group $G_{n-1}$ in $G_{n}$, we denote by $P_n$ the subgroup $G_{n-1}U_n$ of $G_n$.\\
We fix a nontrivial character $\theta$ of $(F,+)$, and denote by $\theta$ again the character $n\mapsto \theta(\sum_{i=1}^{n-1}n_{i,i+1})$ of $N_n$.
The normaliser of $\theta_{|U_n}$ is then $P_{n-1}$.\\

When $G$ is an $l$-group (locally compact totally disconnected group), we denote by $Alg(G)$ the category of smooth complex $G$-modules. If $(\pi,V)$ belongs to $Alg(G)$, $H$ is a closed subgroup of $G$,
 and $\chi$ is a character of $H$, %we denote by $V(H,\chi)$ the subspace of $V$ generated by vectors of the form $\pi(h)v-\chi(h)v$ for $h$ in $H$ and $v$ in $V$. 
%This space is actually stable under the action of the subgroup $N_G(\chi)$ of the normalizer $N_G(H)$ of $H$ in $G$, which fixes $\chi$.\\
we denote by $\delta_H$ the positive character of $N_G(H)$ such that if $\mu$ is a right Haar measure on $H$, and $int$ is the action given by $(int(n)f)(h)=f(n^{-1}hn)$, of $N_G(H)$ smooth functions $f$ with compact support on $H$, then $\mu \circ int(n)= \delta_H(n)\mu $ for $n$ in $N_G(H)$.\\ 
%The space $V(H,\chi)$ is $N_G(\chi)$-stable. Thus, if $L$ is a closed-subgroup of $N_G(\chi)$, and $\mu$ is a (smooth) character of $L$, the quotient $V_{H,\chi}=V/V(H,\chi)$ (that we simply denote by $V_H$ when 
%$\chi$ is trivial) becomes a smooth $L$-module for the action $l.(v + V(H,\chi))= \mu(l)\pi(l)v + V(H,\chi)$ of $L$ on 
%$V_{H,\chi}$.\\  
If $H$ is a closed subgroup of an $l$-group $G$, and $(\rho,W)$ belongs to $Alg(H)$, we define the object 
$(ind_H^G(\rho), V_c=ind_H^G(W))$ %and $(Ind_H^G(\rho), V=Ind_H^G(W))$ of $Alg(G)$ 
as follows.
 The space $V_c$ is the space of smooth functions from $G$ to $W$, fixed under right translation by the elements of a compact open subgroup 
$U_f$ of $G$, satisfying $f(hg)=\rho(h)f(g)$ for all $h$ in $H$ and $g$ in $G$,
%. The space $V_c$ is the subspace of $V$, consisting of functions 
and with support compact mod $H$. The action of $G$ is by right translation on the functions.\\
If $f$ is a function from $G$ to another set, and $g$ belongs to $G$, we will denote $L(g)f:x\mapsto f(g^{-1}x)$ and 
$R(g)f:x\mapsto f(xg)$.\\

We say that a representation $\pi$ of $G$ is $H$-distinguished, if the complex vector space $Hom_H(\pi,1)$ is nonzero.\\

We will use the following functors following \cite{BZ2}:\\

\begin{itemize}

 %\item The functor $\Phi^{-}$ from $Alg(P_k)$ to $Alg(P_{k-1})$ such that, if $(\pi,V)$ is a smooth $P_k$-module, 
%$\Phi^{-} V =V_{U_k,\theta}$, and $P_{k-1}$ acts on $\Phi^{-}(V)$ by
% $\Phi^{-} \pi (p)(v+V(U_k,\theta))= \delta_{U_k} (p)^{-1/2}\pi (p)(v+V(U_k,\theta))$.

\item The functor $\Phi^{+}$ from $Alg(P_{k-1})$ to $Alg(P_{k})$ such that, for $\pi$ in $Alg(P_{k-1})$, one has
$\Phi^{+} \pi = ind_{P_{k-1}U_k}^{P_k}(\delta_{U_k}^{1/2}\pi \otimes \theta)$.

%\item The functor $\hat{\Phi}^{+}$ from $Alg(P_{k-1})$ to $Alg(P_{k})$ such that, for $\pi$ in $Alg(P_{k-1})$, one has
%$\hat{\Phi^{+}} \pi = Ind_{P_{k-1}U_k}^{P_k}(\delta_{U_k}^{1/2}\pi \otimes \theta)$.

% \item The functor $\Psi^{-}$ from $Alg(P_k)$ to $Alg(G_{k-1})$, such that if $(\pi,V)$ is a smooth $P_k$-module, 
% $\Psi^{-} V =V_{U_k,1}$, and $G_{k-1}$ acts on $\Psi^{-}(V)$ by
% $\Psi^{-} \pi (g)(v)+V(U_k,1)= \delta_{U_k} (g)^{-1/2}\pi (p)(v+V(U_k,1))$.

\item The functor $\Psi^{+}$ from $Alg(G_{k-1})$ to $Alg(P_{k})$, such that for $\pi$ in $Alg(G_{k-1})$, one has
$\Psi^{+} \pi = ind_{G_{k-1}U_k}^{P_k}(\delta_{U_k}^{1/2}\pi \otimes 1)=\delta_{U_k}^{1/2}\pi \otimes 1 $.

\end{itemize}
 
We recall the following proposition, which is a consequence of theorem 4.4 of \cite{BZ2}.

\begin{prop}\label{kir}
Let $\pi$ be a cuspidal representation of $G_n$, then the restriction $\pi_{|P_n}$ is isomorphic to $(\Phi^+)^{n-1}\Psi^+(1)$.
\end{prop}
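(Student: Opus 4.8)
The plan is to derive the proposition from the Bernstein--Zelevinsky filtration of $\pi_{|P_n}$ together with the vanishing of the proper Jacquet modules of a cuspidal representation. Recall from \cite{BZ2} (theorem~4.4) that for every $\pi\in Alg(G_n)$ the $P_n$-module $\pi_{|P_n}$ carries a canonical finite filtration whose successive quotients are $(\Phi^{+})^{k-1}\Psi^{+}(\pi^{(k)})$ for $k=1,\dots,n$, where the $k$-th derivative $\pi^{(k)}:=\Psi^{-}(\Phi^{-})^{k-1}(\pi_{|P_n})\in Alg(G_{n-k})$ is defined using the left adjoints $\Phi^{-},\Psi^{-}$ of $\Phi^{+},\Psi^{+}$. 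Granting this, it is enough to prove two things: (i)~$\pi^{(k)}=0$ for $1\le k\le n-1$, and (ii)~$\pi^{(n)}\cong 1$, the trivial representation of $G_0=\{1\}$. Indeed, every graded piece of the filtration with $k<n$ then vanishes while the $k=n$ piece is $(\Phi^{+})^{n-1}\Psi^{+}(1)$, so the filtration degenerates and $\pi_{|P_n}\cong(\Phi^{+})^{n-1}\Psi^{+}(1)$.

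For (i): unwinding the definitions of $\Phi^{-}$ and $\Psi^{-}$, the functor $\Psi^{-}(\Phi^{-})^{k-1}$ is, up to the normalising characters $\delta_{U_j}^{\pm 1/2}$ already built into $\Phi^{\pm},\Psi^{\pm}$, the operation of passing to the coinvariants of $\pi$ under the unipotent subgroup $U_nU_{n-1}\cdots U_{n-k+1}$, twisted by $\theta$ along every factor but the last. This subgroup is the unipotent radical of the standard parabolic subgroup of $G_n$ of type $(n-k,1,\dots,1)$, and it contains the unipotent radical of the maximal parabolic of type $(n-k,k)$; consequently $\pi^{(k)}$ is obtained from the Jacquet module $r_{(n-k,k)}(\pi)$, a representation of $G_{n-k}\times G_k$, by applying a further (exact) functor to the $G_k$-factor. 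Since $\pi$ is cuspidal, $r_{(n-k,k)}(\pi)=0$ for all $1\le k\le n-1$, and hence $\pi^{(k)}=0$ in that range. (One may instead quote directly from \cite{BZ2} that every derivative of a cuspidal representation vanishes except the top one.)

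For (ii): by the same unwinding, $\pi^{(n)}=\Psi^{-}(\Phi^{-})^{n-1}(\pi_{|P_n})$ is the $\theta$-twisted Jacquet module $\pi_{N_n,\theta}$ of $\pi$ along the maximal unipotent subgroup $N_n$; by uniqueness of the Whittaker functional this space is at most one-dimensional, and it is one-dimensional because the cuspidal representation $\pi$ is generic. Since $G_0$ is the trivial group this forces $\pi^{(n)}\cong 1$, and combining (i), (ii) and the filtration of the first paragraph yields $\pi_{|P_n}\cong(\Phi^{+})^{n-1}\Psi^{+}(1)$. The point that requires genuine care is the identification in (i) of the iterated functor $\Psi^{-}(\Phi^{-})^{k-1}$ with a twisted Jacquet functor: one must keep precise track of the modulus twists $\delta_{U_j}^{1/2}$ and of the exactness of $\Phi^{-}$ and $\Psi^{-}$. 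Once that dictionary is in place the rest of the argument is purely formal.
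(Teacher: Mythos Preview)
Your argument is correct and is exactly the intended derivation: the paper does not give a proof but simply records the statement as ``a consequence of theorem 4.4 of \cite{BZ2}'', and your proposal spells out precisely how that consequence is obtained (vanishing of the intermediate derivatives by cuspidality, and $\pi^{(n)}\cong 1$ by genericity of cuspidals). There is nothing to add.
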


\section{The result}

Suppose $n=p+q$, with $p\geq q\geq 1$, we denote by $M_{(p,q)}$ the standard Levi of $G_n$ given by matrices 
$\begin{pmatrix} h_p &  \\  & h_q  \end{pmatrix}$ with $h_p\in G_p$ and $h_q\in G_q$, and by 
$M_{(p,q-1)}$ the standard Levi of $G_{n-1}$ given by matrices 
$\begin{pmatrix} h_p &  \\  & h_{q-1}  \end{pmatrix}$ with $h_p\in G_p$ and $h_{q-1}\in G_{q-1}$. 
We denote by $M_{(p-1,q-1)}$ the standard Levi of $G_{n-2}$ given by matrices 
$\begin{pmatrix} h_{p-1} &  \\  & h_{q-1}  \end{pmatrix}$ with $h_{p-1}\in G_{p-1}$ and $h_{q-1}\in G_{q-1}$.\\
Let $w_{p,q}$ be the permutation matrix of $G_n$ corresponding to the permutation 
$$\left(\begin{array}{llllllllllllllll} 
1 & \!\! \dots \!\! & p-q & p-q +1 & p-q +2 &\!\! \dots \!\! & p-1    & p      & p+1      &\!\! \dots \!\!  & p+q-2 & p+q-1 & p+q \\
1 & \!\! \dots \!\! & p-q & p-q +1 & p-q +3 &\!\!  \dots \!\!& p+q-3 & p+q-1 & p-q + 2 &\!\! \dots \!\!& p+q-4 & p+q-2 & p+q
\end{array}\right)$$
Let $w_{p,q-1}$ be the permutation matrix of $G_{n-1}$ corresponding to the permutation $w_{p,q}$ restricted to $\{1,\dots,n-1\}$:
$$\left(\begin{array}{lllllllllllllll} 
1 & \!\! \dots \!\! & p-q & p-q +1 & p-q +2 &\!\! \dots \!\! & p-1    & p      & p+1      &\!\! \dots \!\!  & p+q-2 & p+q-1  \\
1 & \!\! \dots \!\! & p-q & p-q +1 & p-q +3 &\!\!  \dots \!\!& p+q-3 & p+q-1 & p-q + 2 &\!\! \dots \!\!& p+q-4 & p+q-2 
\end{array}\right)$$ 
Let $w_{p-1,q-1}$ be the permutation matrix of $G_{n-2}$ corresponding to the permutation
$$\left(\begin{array}{lllllllllllllll} 
1 & \!\! \dots \!\! & p-q & p-q+1  & p-q +2 &\!\! \dots \!\! & p-2   & p-1      & p        &\!\! \dots \!\!&  p+q-3 & p+q-2   \\
1 & \!\! \dots \!\! & p-q & p-q+1  & p-q +3 &\!\!  \dots \!\!& p+q-5 & p+q-3   & p-q + 2  &\!\! \dots \!\!&  p+q-4 & p+q-2 
\end{array}\right)$$ 

We denote by $H_{p,q}$ the subgroup $w_{p,q}M_{(p,q)}w_{p,q}^{-1}$ of $G_n$, by $H_{p,q-1}$ the subgroup $w_{p,q-1}M_{(p,q-1)}w_{p,q-1}^{-1}$ of $G_{n-1}$, and by $H_{p-1,q-1}$ the subgroup $w_{p-1,q-1}M_{(p-1,q-1)}w_{p-1,q-1}^{-1}$ of $G_{n-2}$.\\

The two following lemmas and propositions are a straightforward adaptation of Lemma 1 and Proposition 1 of \cite{K}.

\begin{LM}\label{lm1}
Let $S_{p,q}=\{g\in G_{n-1}, \forall u\in U_n\cap H_{p,q}, \ \theta(gug^{-1})=1\}$. Then $S_{p,q}=P_{n-1}H_{p,q-1}$.
\end{LM}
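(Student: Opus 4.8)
The plan is to compute explicitly the action of conjugation by $w_{p,q}$ on the group $U_n \cap M_{(p,q)}$—equivalently, to understand which unipotent matrices in $G_n$, after conjugation by $w_{p,q}$, lie in the last column unipotent group $U_n$ and pair trivially with $\theta$ against the relevant Levi block. First I would note that $U_n \cap H_{p,q} = w_{p,q}(U_n^{w_{p,q}^{-1}} \cap M_{(p,q)})w_{p,q}^{-1}$, so describing $\theta$ on $u \in U_n \cap H_{p,q}$ amounts to writing $\theta(w_{p,q} m w_{p,q}^{-1})$ for the appropriate coordinates $m_{ij}$ of an element $m \in M_{(p,q)}$. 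The character $\theta$ of $N_n$ picks up the superdiagonal entries, so conjugating by $w_{p,q}$ turns this into a linear form in a specific set of matrix entries of $m$, and the condition $\theta(gug^{-1})=1$ for $g \in G_{n-1} \subset G_n$ becomes a condition on how $g$ acts (by the embedded left-multiplication/conjugation on those entries).

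Next I would split the verification into the two inclusions. For $P_{n-1}H_{p,q-1} \subseteq S_{p,q}$: since $H_{p,q-1}$ is by construction the image under $w_{p,q-1}$ (the restriction of $w_{p,q}$ to the first $n-1$ coordinates) of $M_{(p,q-1)}$, elements of $H_{p,q-1}$ fix $\theta$ on $U_n \cap H_{p,q}$ essentially because the extra block in $M_{(p,q)}$ versus $M_{(p,q-1)}$ is the single coordinate $n$, which $w_{p,q}$ sends to itself (the permutation fixes $p+q$), and $P_{n-1} = G_{n-2}U_{n-1}$ is exactly the normaliser of $\theta_{|U_n}$-type data as recalled in the preliminaries—so one checks that $P_{n-1}$ preserves the linear form, using the stated fact that $P_{n-1}$ normalises $\theta_{|U_n}$. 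For the reverse inclusion $S_{p,q} \subseteq P_{n-1}H_{p,q-1}$: I would take $g \in S_{p,q}$, decompose the condition $\theta(gug^{-1})=1$ for all $u \in U_n \cap H_{p,q}$ into linear equations on the entries of $g$, and show these force $g$ into the product $P_{n-1}H_{p,q-1}$, most likely by an explicit reduction: conjugating $g$ by a suitable element of $H_{p,q-1}$ to bring it into a normalised form and then invoking the characterisation of the normaliser of $\theta_{|U_n}$ as $P_{n-1}$ to conclude the remaining factor lies in $P_{n-1}$.

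The main obstacle I anticipate is purely bookkeeping: the permutations $w_{p,q}$, $w_{p,q-1}$, $w_{p-1,q-1}$ are given by somewhat intricate interleaving formulas, so the crux is to track precisely which matrix coordinates of $m \in M_{(p,q)}$ become superdiagonal after conjugation by $w_{p,q}$, and to verify that the resulting linear form is stabilised by $P_{n-1}$ and by $H_{p,q-1}$ but by nothing essentially larger. Since the statement is asserted to be a straightforward adaptation of Lemma 1 of \cite{K}, I expect the combinatorial structure of $w_{p,q}$ to have been engineered precisely so that the group $U_n \cap H_{p,q}$ and the character $\theta$ restricted to it match, after relabelling, the setup in \cite{K}; so the proof should run in parallel to Kable's, with the permutation matrices doing the work of identifying everything correctly. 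I would therefore organise the argument as: (1) identify $U_n \cap H_{p,q}$ explicitly, (2) compute $\theta$ on it as a linear form, (3) show $P_{n-1}H_{p,q-1}$ stabilises it, (4) show nothing bigger does, the last being the step requiring care.
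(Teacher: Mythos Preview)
Your plan is substantially more elaborate than what is needed, and the structural tools you reach for are not quite the right ones. The paper's proof is a three-line direct computation. For $g\in G_{n-1}$ (embedded in $G_n$) and $u(x)\in U_n$ one has $gu(x)g^{-1}=u(gx)$, hence $\theta(gu(x)g^{-1})=\theta\bigl((gx)_{n-1}\bigr)=\theta\bigl(L_{n-1}(g)\cdot x\bigr)$, where $L_{n-1}(g)$ denotes the bottom row of $g$. Thus membership in $S_{p,q}$ is a condition \emph{only on the bottom row} of $g$: the entries $g_{n-1,j}$ must vanish precisely at those indices $j$ for which $x_j$ is a free coordinate of $U_n\cap H_{p,q}$ (namely $j=p-q+2,\,p-q+4,\dots,p+q-2$, determined by where $w_{p,q}$ sends the second block). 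One then checks that this bottom-row vanishing condition characterises $P_{n-1}H_{p,q-1}$: elements of $P_{n-1}$ have bottom row $(0,\dots,0,1)$, and right multiplication by $H_{p,q-1}$ produces exactly the bottom rows with zeros in those prescribed positions. Both inclusions fall out simultaneously; there is no need to treat them separately or to unwind the permutation combinatorics beyond identifying which columns of $U_n$ lie in $H_{p,q}$.

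Your appeal to $P_{n-1}$ being the normaliser of $\theta_{|U_n}$ is not the right lever. That fact says $\theta(pup^{-1})=\theta(u)$ for all $u\in U_n$, whereas $S_{p,q}$ asks for $\theta(gug^{-1})=1$; to use it you would first need $\theta_{|U_n\cap H_{p,q}}=1$ (true, but an extra verification you do not mention). More seriously, for the reverse inclusion your plan is to right-translate $g$ by some $h\in H_{p,q-1}$ into a ``normalised form'' and then invoke the normaliser characterisation to land in $P_{n-1}$. But that characterisation requires the vanishing condition for \emph{all} $u\in U_n$, and you only have it for $u\in U_n\cap H_{p,q}$; there is no mechanism for right multiplication by $H_{p,q-1}$ to promote the weaker hypothesis to the stronger one. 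The step you list in passing, ``decompose the condition into linear equations on the entries of $g$'', is in fact the whole argument---and those equations involve only the last row, which is the simplification you are missing.
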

\begin{proof}
Denoting by $L_{n-1}(g)$ the bottom row of $g$, one has $\theta(g u(x)g^{-1})=\theta (L_{n-1}(g).x)$ for $u(x)$ in $U_n$. Hence $\theta(gug^{-1})=1$ for all $u$ in  $U_n\cap H_{p,q}$ if and only if 
$g_{n-1,j}=0$ for $j=p-q,p-q+2,\dots,p+q-2$. It is equivalent to say that $g$ belongs to $P_{n-1}H_{p,q-1}$.
\end{proof}

\begin{LM}\label{lm2}
Let $S_{p,q-1}=\{g\in G_{n-2}, \forall u\in U_{n-1}\cap H_{p,q-1}, \ \theta(g^{-1}ug)=1\}$. 
Then $S_{p,q}=P_{n-2}H_{p-1,q-1}$.
\end{LM}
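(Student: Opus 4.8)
The plan is to transcribe the proof of Lemma~\ref{lm1} one rank lower, replacing the triple $(G_n,U_n,H_{p,q})$ by $(G_{n-1},U_{n-1},H_{p,q-1})$ and the mirabolic $P_{n-1}$ by $P_{n-2}$. (Consistency with Lemma~\ref{lm1} and with the shape of the right-hand side forces the statement to read $S_{p,q-1}=P_{n-2}H_{p-1,q-1}$, with the conjugation written as $\theta(gug^{-1})$ as in Lemma~\ref{lm1}; I shall prove it in this form.)

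The first step is to compute the character. For $g\in G_{n-2}$ embedded in $G_{n-1}$ as $\mathrm{diag}(g,1)$ and $u(x)\in U_{n-1}$ with $x\in F^{n-2}$, one has $gu(x)g^{-1}=u(gx)$, hence $\theta(gu(x)g^{-1})=\theta((gx)_{n-2})=\theta(L_{n-2}(g)\cdot x)$, where $L_{n-2}(g)$ denotes the bottom row of $g$. Therefore $g\in S_{p,q-1}$ if and only if $L_{n-2}(g)$ is orthogonal to every $x\in F^{n-2}$ with $u(x)\in U_{n-1}\cap H_{p,q-1}$.

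The second step is to describe $U_{n-1}\cap H_{p,q-1}$. Writing $\sigma$ for the permutation underlying $w_{p,q-1}$, the matrix $u(x)$ lies in $H_{p,q-1}=w_{p,q-1}M_{(p,q-1)}w_{p,q-1}^{-1}$ exactly when $w_{p,q-1}^{-1}u(x)w_{p,q-1}=I+\sum_j x_j E_{\sigma^{-1}(j),\sigma^{-1}(n-1)}$ is block-diagonal for $M_{(p,q-1)}$, where the $E_{a,b}$ are the elementary matrices; since $\sigma^{-1}(n-1)$ lies in the $GL_p$-block, this forces $x_j=0$ unless $\sigma^{-1}(j)$ also lies in that block, i.e. $x$ is supported on a fixed coordinate set $J'\subset\{1,\dots,n-2\}$ read off from $\sigma$. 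Combined with the first step, this says: $g\in S_{p,q-1}$ if and only if $g_{n-2,j}=0$ for all $j\in J'$, equivalently the bottom row of $g$ is supported on the complement $J'^c$ of $J'$ in $\{1,\dots,n-2\}$. Then, exactly as in Lemma~\ref{lm1}, I would identify $\{g\in G_{n-2}: L_{n-2}(g)\text{ supported on }J'^c\}$ with $P_{n-2}H_{p-1,q-1}$: left translation by $P_{n-2}=G_{n-3}U_{n-2}$ does not change the bottom row, and $P_{n-2}\backslash G_{n-2}$ is parametrised by the (nonzero) bottom row, so it suffices to check that the bottom rows of elements of $H_{p-1,q-1}=w_{p-1,q-1}M_{(p-1,q-1)}w_{p-1,q-1}^{-1}$ are exactly the nonzero vectors supported on $J'^c$. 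Under $w_{p-1,q-1}$ the bottom row of $G_{n-2}$ corresponds to a row of $M_{(p-1,q-1)}$ sitting in its $GL_{q-1}$-block, and this row has support $\{w_{p-1,q-1}(\ell):\ell\text{ in that block}\}$, which one verifies equals $J'^c=\{p-q+2,p-q+4,\dots,p+q-2\}$.

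The only genuine work, and the one place I expect to spend effort, is this last combinatorial bookkeeping: checking that the ``column support'' $J'$ produced by $w_{p,q-1}$ and the ``bottom-row support'' produced by $w_{p-1,q-1}$ are exact complements in $\{1,\dots,n-2\}$, and that the degenerate cases $p=q$ and small $q$ (where some of the displayed index ranges are empty) behave as required. Everything else is a line-by-line repetition of the proof of Lemma~\ref{lm1}.
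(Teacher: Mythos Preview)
Your proposal is correct and follows essentially the same route as the paper's proof: compute $\theta(gu(x)g^{-1})=\theta(L_{n-2}(g)\cdot x)$, read off from the permutation $w_{p,q-1}$ that $U_{n-1}\cap H_{p,q-1}$ consists of the $u(x)$ with $x$ supported on $J'=\{1,\dots,p-q+1\}\cup\{p-q+3,p-q+5,\dots,p+q-3\}$, and conclude that $g\in S_{p,q-1}$ iff its bottom row is supported on $J'^c=\{p-q+2,p-q+4,\dots,p+q-2\}$, which is $P_{n-2}H_{p-1,q-1}$. The paper's proof is the same computation stated tersely (with a few index typos, e.g.\ the spurious $j=0$); your correction of the statement to $S_{p,q-1}$ and to the conjugation $gug^{-1}$ is exactly what the paper's own proof uses.
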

\begin{proof} Denoting by $L_{n-2}(g)$ the bottom row of $g$, and by $u(x)$ the matrix $\begin{pmatrix} I_{n-2}& x \\ 0 & 1 \end{pmatrix}$, 
sothat $\theta(gug^{-1})=\theta (L_{n-2}(g).x)$. Hence $\theta(gug^{-1})=1$ for all $u$ in  $U_{n-1}\cap H_{p,q-1}$ if and only if 
$g_{n-2,j}=0$ for $j=0,1,\dots,p-q,p-q+1$ and $j=p-q+3,p-q+5\dots,p+q-5,p+q-3$. It is equivalent to say that $g$ belongs to $P_{n-2}H_{p-1,q-1}$.
\end{proof}

\begin{prop}\label{prop1}
Let $\sigma$ belong to $Alg(P_{n-1})$, and $\chi$ be a positive character of $P_{n}\cap H_{p,q}$, then there is a positive character $\chi'$ of 
$P_{n-1}\cap H_{p,q-1}$, such that $$Hom_{P_n\cap H_{p,q}}(\Phi^+ \sigma,\chi)\hookrightarrow Hom_{P_{n-1}\cap H_{p,q-1}}(\sigma,\chi').$$
\end{prop}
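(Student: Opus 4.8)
The goal is a Mackey-type restriction argument: $\Phi^+\sigma$ is a compactly-induced representation $\mathrm{ind}_{P_{n-1}U_n}^{P_n}(\delta_{U_n}^{1/2}\sigma\otimes\theta)$, and we want to understand the space of $(P_n\cap H_{p,q})$-equivariant functionals on it. The standard device is Frobenius reciprocity for $\mathrm{ind}$ (the "second adjunction"/Bernstein–Zelevinsky geometric lemma flavor): $\mathrm{Hom}_{P_n\cap H_{p,q}}(\mathrm{ind}_{P_{n-1}U_n}^{P_n}(\tau),\chi)$ is computed by restricting the induced representation to $P_n\cap H_{p,q}$ and decomposing along the orbits of $P_n\cap H_{p,q}$ acting on $(P_{n-1}U_n)\backslash P_n$. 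So first I would set $Q = P_{n-1}U_n = P_n$ — wait, that is all of $P_n$; the point is rather that $\Phi^+\sigma$ is induction from $P_{n-1}U_n$ up to $P_n$, and $P_{n-1}U_n$ has finite index behavior controlled by $U_n/(U_n\cap P_{n-1}) \cong F^{n-1}$, so the double cosets $(P_{n-1}U_n)\backslash P_n / (P_n\cap H_{p,q})$ are governed by how $P_n\cap H_{p,q}$ sits relative to $U_n$.

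**Main steps.** (1) Identify the relevant double coset space. Since $P_n = P_{n-1}U_n \rtimes (\text{nothing extra})$ — actually $P_n = G_{n-1}U_n$ and $P_{n-1}U_n = G_{n-2}U_{n-1}U_n$, the quotient $(P_{n-1}U_n)\backslash P_n$ is $P_{n-1}\backslash G_{n-1}$, and we need the orbits of $P_n\cap H_{p,q}$ on this. Here Lemma \ref{lm1} is exactly what makes the open/relevant orbit tractable: $S_{p,q}=\{g\in G_{n-1}: \theta(gug^{-1})=1\ \forall u\in U_n\cap H_{p,q}\}=P_{n-1}H_{p,q-1}$ is precisely the set where the character $\theta$ on $U_n\cap H_{p,q}$ is trivial, hence where a nonzero equivariant functional can be supported (on any other coset, $\chi$ restricted to $U_n\cap H_{p,q}$ would have to match the nontrivial $\theta$-twist, forcing vanishing). (2) Conclude that $\mathrm{Hom}_{P_n\cap H_{p,q}}(\Phi^+\sigma,\chi)$ embeds into the functionals supported on the distinguished coset $P_{n-1}U_n\cdot 1$, i.e. into $\mathrm{Hom}$ over the stabilizer, which unwinds to $\mathrm{Hom}_{P_{n-1}\cap H_{p,q-1}}(\delta_{U_n}^{1/2}\sigma\otimes\theta\big|_{\ldots}, \chi\big|_{\ldots})$. (3) Absorb all the modular characters $\delta_{U_n}^{1/2}$, the Haar-measure twists $\delta_{P_n\cap H_{p,q}}$ vs $\delta_{P_{n-1}\cap H_{p,q-1}}$ coming from the induction/restriction bookkeeping, and the surviving $\theta$ on $U_n\cap P_{n-1}$ (which, by the definition of $P_{n-1}$ normalizing $\theta|_{U_n}$, behaves controllably on $P_{n-1}\cap H_{p,q-1}$) into the single positive character $\chi'$ of $P_{n-1}\cap H_{p,q-1}$. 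This is the "straightforward adaptation of Proposition 1 of \cite{K}" the author advertises.

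**The obstacle.** The genuinely delicate part is step (2)–(3): showing the embedding (not equality) by a support/filtration argument and, above all, verifying that the leftover characters really do assemble into a \emph{positive} character $\chi'$ of $P_{n-1}\cap H_{p,q-1}$ — one must check that the restriction of $\theta$ to $U_{n-1}\cap H_{p,q-1}$ inside the stabilizer is trivial (so nothing non-positive sneaks in), which is where the explicit description of $w_{p,q}$ and the conjugated Levi $H_{p,q-1}$ enters, and where Lemma \ref{lm1}'s computation "$g_{n-1,j}=0$ for $j=p-q,p-q+2,\dots,p+q-2$" does the real work. I would organize the proof as: restrict $\Phi^+\sigma$ to $P_n\cap H_{p,q}$; filter by $(P_n\cap H_{p,q})$-stable subspaces indexed by orbit closures in $P_{n-1}\backslash G_{n-1}$; observe every open orbit off the distinguished one carries a functional space that is zero because $\chi$ cannot match the nontrivial additive character; deduce injectivity of restriction-to-the-closed-piece; and finally identify that closed piece via Frobenius reciprocity, reading off $\chi'$ from the modulus and $\theta$-bookkeeping. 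The subtlety to flag is that one gets an injection, not an isomorphism, precisely because the other orbits, while contributing zero $\mathrm{Hom}$, still obstruct a clean quotient description — so the statement is deliberately one-directional.
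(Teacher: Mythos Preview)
Your approach is essentially the paper's: both arguments pin down the support of any $(P_n\cap H_{p,q})$-equivariant functional to the closed set $P_{n-1}U_nH_{p,q-1}$ via Lemma~\ref{lm1} (the paper lifts to $V$-valued distributions on $P_n$ and argues directly on supports, you phrase the same thing as a Mackey filtration with vanishing on the open complement---these are equivalent), and then identify the resulting Hom-space over the stabilizer, reading off $\chi'$ as a product of modular characters. One simplification worth noting: since the stabilizer $P_{n-1}\cap H_{p,q-1}$ sits inside $G_{n-1}$ and meets $U_n$ trivially, the character $\theta$ does not enter $\chi'$ at all (the paper obtains $\chi'=\delta^{-1}\delta_1\chi\,\delta_{U_n}^{3/2}$, manifestly positive), so your concern about a non-positive $\theta$-contribution is unnecessary.
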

\begin{proof}
 Let $V$ be the space on which $\sigma$ acts, and $W=\phi^+ V$. 
Let $A$ the projection from $\sm_c(P_n,V)$ onto $W$, defined by 
$A(f(p))= \int_{P_{n-1}U_n} \delta_{U_n}^{-1/2}(y)\sigma(y^{-1})f(yg) dy$. Lifting through $A$ gives a vector space injection of 
 $Hom_{P_n\cap H_{p,q}}(\Phi^+ \sigma,\chi)$ into the space of $V$-distributions $T$ on $P_n$ 
satisfying relations 
\begin{eqnarray}\label{eq1}
T\circ R(h_0) = \chi(h_0)T
\end{eqnarray}
\begin{eqnarray}\label{eq2}
T\circ L(y_0) = \delta_{U_n}^{3/2}(y_0) T\circ \sigma(y_0)
\end{eqnarray}
for $h_0$ in $P_n\cap H_{p,q}$ and $y_0\in P_{n-1}U_n$.\\
We introduce $\Theta$ the map on $P_n$ defined by $\Theta (ug)=\theta(u)$ for $u$ in $U_n$ and $g$ in $G_{n-1}$.
Then the $V$-distribution $\Theta. T$ is $U_n$-invariant, hence there is a $V$-distribution $S$ with support in 
$G_{n-1}$ such that $\Theta.T= du\otimes S$ (where $du$ denotes a Haar measure on $U_n$), and thus
$T=\Theta^{-1}.du\otimes S$ has support $U_n. \text{supp}(S)$. It is easely verified that $du\otimes S$ is 
invariant-$U_n$, but because of relation (\ref{eq1}), $T$ is invariant-$(U_n\cap H_{p,q})$. We deduce from these two facts that for $g$ in $\text{supp}(S)$, 
$\Theta(gu)$ must be equal to $\Theta(g)$ for any $u$ in $U_n\cap H_{p,q}$. This means that $\text{supp}(S)\subset S_{p,q}$, and $S_{p,q}=P_{n-1}H_{p,q-1}$ according to Lemma \ref{lm1}, hence $T$ has support in $P_{n-1}U_nH_{p,q-1}$.\\
Now consider the projection $B:\sm_c(P_{n-1}U_n\times H_{p,q-1},V)\twoheadrightarrow \sm_c(P_{n-1}U_nH_{p,q-1},V)$, defined by $B(\phi)(y^{-1}h)=\int_{P_{n-1}\cap H_{p,q-1}} \phi(ay,ah)da$ (which is well defined because of the equality 
$P_{n-1}U_n\cap H_{p,q-1}= P_{n-1}\cap H_{p,q-1}$), and $\phi \mapsto \tilde{\phi}$ the isomorphism of $\sm_c(P_{n-1}U_n \times H_{p,q-1},V)$ defined by $\tilde{\phi}(y,h)=\chi(h)\delta_{U_n}(y)^{3/2}\sigma(y)\phi(y,h)$.\\
If one sets $D(\phi)=T(B(\tilde{\phi}))$, then $D$ is a $V$-distribution on $P_{n-1}U_n\times H_{p,q-1}$ which is invariant-$P_{n-1}U_n\times H_{p,q-1}$. This implies that there exists a unique linear form $\lambda$ on $V$, such that 
for all $D(\phi)=\int_{P_{n-1}U_n\times H_{p,q-1}} \lambda(\phi(y,h)) dy dh$.\\
Now for $b$ in $P_{n-1}\cap H_{p,q-1}$, on has from the integral expression of $D$, the relation 
$D\circ L(b,b)= \delta(b) D$ for some positive modulus character $\delta$. On the other hand, writing $D$ as 
$\phi\mapsto T(B(\tilde{\phi}))$, one has 
$\widetilde{L(b,b)\phi}=\chi(b)\delta_{U_n}^{3/2}(b)L(b,b)(\widetilde{\sigma(b^{-1})\phi})$ and $B\circ L(b,b)=\delta_1(b) B$ for a positive modulus character $\delta_1$, sothat 
$D\circ L(b,b)= \delta_1(b)\chi(b)\delta_{U_n}^{3/2}(b)D\circ \sigma(b^{-1})$. Comparing the two expressions for $D\circ L(b,b)$, 
we get the relation  $D\circ \sigma(b)= \chi'(b)D$, with $\chi'$ being the positive character $\delta^{-1}\delta_1\chi\delta_{U_n}^{3/2}$ of $P_{n-1}\cap H_{p,q-1}$.\\
This in turn implies that the linear form $\lambda$ on $V$ satisfies the same relation, i.e. belongs 
to $Hom_{P_{n-1}\cap H_{p,q-1}}(\sigma,\chi')$, and $T\mapsto \l$ gives a linear injection of $Hom_{P_n\cap H_{p,q}}(\Phi^+ \sigma,\chi)$ into $Hom_{P_{n-1}\cap H_{p,q-1}}(\sigma,\chi')$, and this proves the proposition.

\end{proof}

Using Lemma \ref{lm2} instead of Lemma \ref{lm1} in the previous proof, one obtains the following statement.

\begin{prop}\label{prop2}
Let $\sigma'$ belong to $Alg(P_{n-2})$, and $\chi'$ be a positive character of $P_{n-1}\cap H_{p,q-1}$, then there is a positive character $\chi''$ of 
$P_{n-2}\cap H_{p-1,q-1}$, such that $$Hom_{P_{n-1}\cap H_{p,q-1}}(\Phi^+ \sigma',\chi')\hookrightarrow Hom_{P_{n-2}\cap H_{p-1,q-1}}(\sigma,\chi'').$$
\end{prop}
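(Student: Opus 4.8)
The plan is to imitate, essentially verbatim, the argument used to prove Proposition \ref{prop1}, but with the roles of $G_{n-1}$, $P_{n-1}$, $H_{p,q-1}$, $U_n$, $S_{p,q}$ and Lemma \ref{lm1} played by $G_{n-2}$, $P_{n-2}$, $H_{p-1,q-1}$, $U_{n-1}$, $S_{p,q-1}$ and Lemma \ref{lm2} respectively. Concretely, let $V$ be the space of $\sigma'$ and $W=\Phi^+V$. Using the averaging projection $A$ from $\sm_c(P_{n-1},V)$ onto $W$ given by $A(f)(g)=\int_{P_{n-2}U_{n-1}}\delta_{U_{n-1}}^{-1/2}(y)\sigma'(y^{-1})f(yg)\,dy$, lifting through $A$ embeds $Hom_{P_{n-1}\cap H_{p,q-1}}(\Phi^+\sigma',\chi')$ into the space of $V$-valued distributions $T$ on $P_{n-1}$ satisfying $T\circ R(h_0)=\chi'(h_0)T$ for $h_0\in P_{n-1}\cap H_{p,q-1}$ and $T\circ L(y_0)=\delta_{U_{n-1}}^{3/2}(y_0)\,T\circ\sigma'(y_0)$ for $y_0\in P_{n-2}U_{n-1}$.

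Next I would introduce the map $\Theta$ on $P_{n-1}$ defined by $\Theta(ug)=\theta(u)$ for $u\in U_{n-1}$, $g\in G_{n-2}$, and observe that $\Theta\cdot T$ is $U_{n-1}$-invariant, so $\Theta\cdot T=du\otimes S$ for a $V$-valued distribution $S$ supported in $G_{n-2}$, whence $T=\Theta^{-1}\cdot(du\otimes S)$ has support $U_{n-1}\cdot\mathrm{supp}(S)$. The $(U_{n-1}\cap H_{p,q-1})$-invariance of $T$ coming from relation $(1)$, combined with the genuine $U_{n-1}$-invariance of $du\otimes S$, forces $\Theta(gu)=\Theta(g)$ for all $g\in\mathrm{supp}(S)$ and $u\in U_{n-1}\cap H_{p,q-1}$; that is $\mathrm{supp}(S)\subset S_{p,q-1}$, which by Lemma \ref{lm2} equals $P_{n-2}H_{p-1,q-1}$. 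Hence $T$ is supported in $P_{n-2}U_{n-1}H_{p-1,q-1}$, and I should check that $P_{n-2}U_{n-1}\cap H_{p-1,q-1}=P_{n-2}\cap H_{p-1,q-1}$, the analogue of the equality used to define $B$ in the proof of Proposition \ref{prop1}.

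From there the argument is formally identical to the end of the proof of Proposition \ref{prop1}: form the surjection $B:\sm_c(P_{n-2}U_{n-1}\times H_{p-1,q-1},V)\twoheadrightarrow\sm_c(P_{n-2}U_{n-1}H_{p-1,q-1},V)$ by averaging over $P_{n-2}\cap H_{p-1,q-1}$, twist by $\tilde\phi(y,h)=\chi'(h)\delta_{U_{n-1}}^{3/2}(y)\sigma'(y)\phi(y,h)$, set $D(\phi)=T(B(\tilde\phi))$, note $D$ is invariant under $P_{n-2}U_{n-1}\times H_{p-1,q-1}$ so $D(\phi)=\int\lambda(\phi(y,h))\,dy\,dh$ for a unique linear form $\lambda$ on $V$, and compare the two ways of computing $D\circ L(b,b)$ for $b\in P_{n-2}\cap H_{p-1,q-1}$ to extract a relation $D\circ\sigma'(b)=\chi''(b)D$ with $\chi''=\delta^{-1}\delta_1\chi'\delta_{U_{n-1}}^{3/2}$ a positive character of $P_{n-2}\cap H_{p-1,q-1}$. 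This relation passes to $\lambda$, so $\lambda\in Hom_{P_{n-2}\cap H_{p-1,q-1}}(\sigma',\chi'')$, and $T\mapsto\lambda$ is the desired injection.

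Since the whole scheme is already established in Proposition \ref{prop1}, the only genuinely new points to verify are bookkeeping ones: that Lemma \ref{lm2} does describe $S_{p,q-1}$ as $P_{n-2}H_{p-1,q-1}$ (which the excerpt asserts, despite the typo writing $S_{p,q}$ there), that the embedding $G_{n-2}\hookrightarrow G_{n-1}$ and the subgroup $U_{n-1}$ interact with $H_{p,q-1}$ exactly as $G_{n-1}\hookrightarrow G_n$, $U_n$ did with $H_{p,q}$, and that the intersection identity $P_{n-2}U_{n-1}\cap H_{p-1,q-1}=P_{n-2}\cap H_{p-1,q-1}$ holds. I expect the main obstacle — if any — to be purely combinatorial: checking that the permutation defining $w_{p-1,q-1}$ places the relevant nonzero entries of the bottom row of $g$ in precisely the positions needed for Lemma \ref{lm2}, and hence that the descent from $(p,q-1)$ to $(p-1,q-1)$ mirrors the descent from $(p,q)$ to $(p,q-1)$; once that is in place, every remaining step is a routine transcription of the proof of Proposition \ref{prop1}.
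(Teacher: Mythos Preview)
Your proposal is correct and is exactly the approach the paper takes: the paper's entire proof of Proposition~\ref{prop2} is the single sentence ``Using Lemma~\ref{lm2} instead of Lemma~\ref{lm1} in the previous proof, one obtains the following statement,'' and you have faithfully written out what that transcription amounts to, including catching the typo in Lemma~\ref{lm2}.
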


A consequence of these two propositions is the following.

\begin{prop}\label{homzero}
Let $n\geq 3$, and $p$ and $q$ two integers with $p+q=n$ and $p-1\geq q\geq 0$, then one has $Hom_{P_n\cap H_{p,q}}((\Phi^+)^{n-1}\Psi^+(1),1)=0$.
\end{prop}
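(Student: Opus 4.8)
The plan is to feed Propositions \ref{prop1} and \ref{prop2} into one another repeatedly so as to bring the pair $(p,q)$ down to a pair whose second entry is $0$, and then to conclude by an elementary vanishing. The one auxiliary statement I would prove first is the following.

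\emph{Claim.} If $m\ge 2$, $\tau\in Alg(P_{m-1})$ and $\chi$ is a positive character of $P_m$, then $Hom_{P_m}(\Phi^+\tau,\chi)=0$. I would prove this by the same distribution argument as in the proof of Proposition \ref{prop1}, with the whole group $U_m$ in place of $U_n\cap H_{p,q}$: lifting through the averaging projection $A$, an element of $Hom_{P_m}(\Phi^+\tau,\chi)$ gives a $V_\tau$-valued distribution $T$ on $P_m$ transforming on the right by $\chi$ under $P_m$ and on the left, under $P_{m-1}U_m$, according to relation (\ref{eq2}); writing $T=\Theta^{-1}.du\otimes S$ with $\Theta(ug)=\theta(u)$ and $S$ supported in $G_{m-1}$, the two transformation laws restricted to $U_m$ force, for every $g\in\text{supp}(S)$, the identity $\theta(gug^{-1})=1$ for all $u\in U_m$ — here one uses that $\chi_{|U_m}$ is trivial (a positive character is trivial on $U_m\simeq(F,+)^{m-1}$, which admits no nontrivial continuous homomorphism to $\R_{>0}$) and that $\delta_{U_m}$ is trivial on the abelian group $U_m$. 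But $\theta(gu(x)g^{-1})=\theta(L_{m-1}(g).x)$ with $L_{m-1}(g)$ the bottom row of $g\in G_{m-1}$, which is never zero, and $\theta_{|U_m}$ is a nontrivial character because $m\ge 2$; hence $\text{supp}(S)=\emptyset$, so $S=0$ and $T=0$.

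Granting the Claim, the case $q=0$ is immediate: then $p=n\ge 3$, $H_{n,0}=G_n$ and $P_n\cap H_{n,0}=P_n$, and since $n\ge 2$ one may write $(\Phi^+)^{n-1}\Psi^+(1)=\Phi^+\big((\Phi^+)^{n-2}\Psi^+(1)\big)$ and apply the Claim with $\chi=1$. For $q\ge 1$ I would start from $Hom_{P_n\cap H_{p,q}}((\Phi^+)^{n-1}\Psi^+(1),1)$ and, using at each stage that $(\Phi^+)^k\Psi^+(1)=\Phi^+\big((\Phi^+)^{k-1}\Psi^+(1)\big)$ for $k\ge 1$, apply Proposition \ref{prop1} followed by Proposition \ref{prop2} to obtain an embedding into $Hom_{P_{n-2}\cap H_{p-1,q-1}}((\Phi^+)^{n-3}\Psi^+(1),\chi_1)$ for some positive character $\chi_1$. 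Iterating this ``double step'' $q-1$ times yields, for some positive character $\chi$, an embedding
$$Hom_{P_n\cap H_{p,q}}((\Phi^+)^{n-1}\Psi^+(1),1)\hookrightarrow Hom_{P_{p-q+2}\cap H_{p-q+1,1}}((\Phi^+)^{p-q+1}\Psi^+(1),\chi),$$
the point being that the exponent of $\Phi^+$ and the second index of the group $H$ stay $\ge 1$ whenever Proposition \ref{prop1} or \ref{prop2} is invoked (this uses only $p\ge q$), so that the hypotheses of those propositions are always met. One last application of Proposition \ref{prop1} (legitimate since the second index is $1$) embeds this into $Hom_{P_{p-q+1}\cap H_{p-q+1,0}}((\Phi^+)^{p-q}\Psi^+(1),\chi')$ for a positive character $\chi'$. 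Finally $H_{p-q+1,0}=G_{p-q+1}$, so $P_{p-q+1}\cap H_{p-q+1,0}=P_{p-q+1}$; and since $p-1\ge q$ one has both $p-q+1\ge 2$ and $p-q\ge 1$, so $(\Phi^+)^{p-q}\Psi^+(1)=\Phi^+\big((\Phi^+)^{p-q-1}\Psi^+(1)\big)$ and the Claim gives $Hom_{P_{p-q+1}}((\Phi^+)^{p-q}\Psi^+(1),\chi')=0$, whence $Hom_{P_n\cap H_{p,q}}((\Phi^+)^{n-1}\Psi^+(1),1)=0$.

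The substantial input is entirely in Propositions \ref{prop1} and \ref{prop2} (hence in Lemmas \ref{lm1} and \ref{lm2}); what remains is the Claim — itself a degenerate instance of the same distribution computation — and a routine bookkeeping of indices, so I do not expect a real obstacle here. The one structural remark worth making is that the hypothesis $p-1\ge q$ is used exactly to guarantee that the iteration terminates at a mirabolic subgroup $P_{p-q+1}$ with $p-q+1\ge 2$, where the Claim applies; for $p=q$ the iteration would instead reach $P_1$ and the final vanishing would fail, in agreement with the fact, recalled in the abstract, that cuspidal representations of $GL(n,F)$ can be distinguished by $GL(n/2,F)\times GL(n/2,F)$.
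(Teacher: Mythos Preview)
Your argument is correct and follows exactly the paper's route: iterate Propositions \ref{prop1} and \ref{prop2} to reduce to $Hom_{P_{p-q+1}}((\Phi^+)^{p-q}\Psi^+(1),\chi')$ with $\chi'$ positive, and then show this space vanishes. The only difference is at that terminal step: the paper dispatches it in one line by citing corollary 3.5 of \cite{BZ2} (the representations $(\Phi^+)^{p-q}\Psi^+(1)$ and $\chi'$ of $P_{p-q+1}$ are irreducible and non-isomorphic), whereas you reprove the needed vanishing via your Claim --- which, as you note, is just the degenerate $q=0$ case of the same distribution computation; your version is more self-contained, the paper's is shorter.
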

\begin{proof} Using repeatedly the last two propositions, we get the existence of a positive character $\chi$ of $P_{p-q+1}$ such that 
$Hom_{P_n\cap H_{p,q}}((\Phi^+)^{n-1}\Psi^+(1),1)\hookrightarrow Hom_{P_{p-q+1}\cap H_{p-q+1,0}}((\Phi^+)^{p-q}\Psi^+(1),\chi)=Hom_{P_{p-q+1}}((\Phi^+)^{p-q}\Psi^+(1),\chi)$, and this last space is $0$ 
because $(\Phi^+)^{p-q}\Psi^+(1)$ and $\chi$ are two non isomorphic irreducible representations of $P_{p-q+1}$, according to corollary 3.5 of \cite{BZ2}.

\end{proof}

This implies the following theorem about cuspidal representations.

\begin{thm}
Let $\pi$ be a cuspidal representation of $G_n$, which is distinguished by a maximal Levi subgroup $M$, 
then $n$ is even and $M\simeq M_{n/2,n/2}$.
\end{thm}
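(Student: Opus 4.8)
The plan is to reduce the statement about the cuspidal representation $\pi$ of $G_n$ to the vanishing result of Proposition \ref{homzero}. First I would observe that any maximal Levi subgroup $M$ of $G_n$ is conjugate to a standard one $M_{(p,q)}$ with $p+q=n$ and $p\geq q\geq 1$; since distinction is invariant under conjugating the subgroup, we may as well assume $M=M_{(p,q)}$. The key reduction step is to replace $M_{(p,q)}$ by $H_{p,q}=w_{p,q}M_{(p,q)}w_{p,q}^{-1}$, which is again a conjugate, so $\pi$ is $M_{(p,q)}$-distinguished if and only if it is $H_{p,q}$-distinguished, i.e. $Hom_{H_{p,q}}(\pi,1)\neq 0$.

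Next I would pass from $G_n$ to $P_n$. The point of the funny-looking permutation $w_{p,q}$ is precisely that it is chosen so that $H_{p,q}\cap P_n$ has a favourable shape with respect to the mirabolic filtration — concretely, so that Lemmas \ref{lm1} and \ref{lm2} hold and one can iterate the two Propositions \ref{prop1} and \ref{prop2}. I would use the elementary fact that if $H$ is a subgroup with $G_n = P_n\cdot H$ (or more precisely, if the relevant double coset analysis shows that an $H_{p,q}$-invariant functional on $\pi$ restricts injectively, or bijectively, to a $(P_n\cap H_{p,q})$-invariant functional on $\pi|_{P_n}$), then $Hom_{H_{p,q}}(\pi,1)\hookrightarrow Hom_{P_n\cap H_{p,q}}(\pi|_{P_n},1)$. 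Granting this, Kirillov's description recalled in Proposition \ref{kir} gives $\pi|_{P_n}\simeq (\Phi^+)^{n-1}\Psi^+(1)$, so we land in $Hom_{P_n\cap H_{p,q}}((\Phi^+)^{n-1}\Psi^+(1),1)$.

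Now comes the dichotomy on $p$ and $q$. If $p>q$, then $p-1\geq q\geq 1$, so Proposition \ref{homzero} applies directly with the character $1$ and gives $Hom_{P_n\cap H_{p,q}}((\Phi^+)^{n-1}\Psi^+(1),1)=0$, whence $Hom_{M}(\pi,1)=0$, contradicting distinction. Therefore $p=q$, i.e. $n=2q$ is even and $M\simeq M_{(n/2,n/2)}$, which is the desired conclusion. Strictly speaking one should also dispose of the degenerate reduction when $q$ could be pushed to $0$; but the hypothesis $p\geq q\geq 1$ for a genuine maximal Levi, together with the $p-1\geq q\geq 0$ range allowed in Proposition \ref{homzero}, covers exactly the case $p>q\geq 1$ that we need to exclude.

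The main obstacle, and the step requiring genuine care rather than bookkeeping, is justifying the injection $Hom_{H_{p,q}}(\pi,1)\hookrightarrow Hom_{P_n\cap H_{p,q}}(\pi|_{P_n},1)$: one must check that the $H_{p,q}$-orbits on $P_n\backslash G_n$ (equivalently the $P_n$-orbits on $G_n/H_{p,q}$) are controlled well enough that no new invariant functionals are lost in the restriction — typically one shows there is an open orbit whose stabiliser meets $P_n$ in $P_n\cap H_{p,q}$, and that the contributions of the other orbits vanish because $\pi$ is cuspidal (so its Jacquet modules along proper parabolics are zero). This is exactly where cuspidality of $\pi$ is used, and it is the analogue of the corresponding argument in \cite{K}; everything downstream is the formal iteration of Propositions \ref{prop1} and \ref{prop2} already packaged in Proposition \ref{homzero}.
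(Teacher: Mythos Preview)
Your outline matches the paper's proof almost exactly: reduce to a standard Levi, conjugate to $H_{p,q}$, restrict to $P_n$, invoke Proposition~\ref{kir}, and conclude by Proposition~\ref{homzero} that $p>q$ is impossible.

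The only point to correct is your assessment of the ``main obstacle''. The injection
\[
Hom_{H_{p,q}}(\pi,1)\hookrightarrow Hom_{P_n\cap H_{p,q}}(\pi|_{P_n},1)
\]
is completely trivial and requires no orbit analysis, no decomposition $G_n=P_n\cdot H$, and no use of cuspidality: a nonzero linear functional on the space of $\pi$ that is invariant under $H_{p,q}$ is, a fortiori, invariant under the smaller group $P_n\cap H_{p,q}$, and the map is literally the identity on underlying linear forms, hence injective. This is exactly what the paper does in one line (``$\pi_{|P_n}$ is thus $H_{p,q}\cap P_n$-distinguished''). Cuspidality of $\pi$ enters the argument \emph{only} through Proposition~\ref{kir}, which identifies $\pi|_{P_n}$ with $(\Phi^+)^{n-1}\Psi^+(1)$; there is no further appeal to vanishing Jacquet modules.
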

\begin{proof} Let $M$ be the maximal Levi subgroup such that $\pi$ is $M$-distinguished. Then $M$ is conjugate to a standard Levi subgroup 
$M_{p,q}$ with $p\geq q$ and $p+q=n$. Suppose $p\geq q+1$, $M_{p,q}$ is conjugate to $H_{p,q}$, sothat $\pi$ is $H_{p,q}$-distinguished, and 
$\pi_{|P_n}$ is thus $H_{p,q}\cap P_n$-distinguished.  
But by Proposition \ref{kir}, the restriction $\pi_{|P_n}$ is isomorphic to $(\Phi^+)^{n-1}\Psi^+(1)$, and this contradicts 
Proposition \ref{homzero}. Hence one must have $p=q$, and this proves the theorem.\end{proof}

\end{document}